\theoremstyle{plain} \newtheorem{theorem}{Theorem}[section] 
\newtheorem{lemma}{Lemma}[section]  \newtheorem{conj}{Conjecture}[section] \theoremstyle{definition} 
  \theoremstyle{remark} 
\numberwithin{equation}{section}     
   \theoremstyle{theorem}  
  \DeclareMathOperator{\ord}{ord}
\begin{document}

\title[A modular app. to the gen. Ramanujan-Nagell eq.] {A modular approach to the generalized Ramanujan-Nagell equation}

\author[E. K. Mutlu, M. Le and G. Soydan]{Elif K{\i}z{\i}ldere Mutlu, Maohua Le and G\"{o}khan Soydan}

\address{{\bf Elif K{\i}z{\i}ldere Mutlu}\\ Department of Mathematics \\ Bursa Uluda\u{g} University\\
 16059 Bursa, Turkey}
\email{elfkzldre@gmail.com}

\urladdr{http://orcid.org/0000-0002-7651-7001}

\address{{\bf Maohua Le}\\
	Institute of Mathematics, Lingnan Normal College\\
	Zhangjiang, Guangdong, 524048 China}

\email{lemaohua2008@163.com}
\urladdr{http://orcid.org/0000-0002-7502-2496}

\address{{\bf G\"{o}khan Soydan} \\ 	Department of Mathematics \\ 	Bursa Uluda\u{g} University\\ 	16059 Bursa, Turkey} \email{gsoydan@uludag.edu.tr }
\urladdr{http://orcid.org/0000-0002-6321-4132}

\newcommand{\acr}{\newline\indent}

\thanks{}

\subjclass[2010]{11D61} \keywords{polynomial-exponential Diophantine equation; elliptic curve; $S$-integral point; modular approach}

\begin{abstract}
Let $k$ be a positive integer. In this paper, using the modular approach, we prove that if $k\equiv 0 \pmod{4}$, $30< k<724$ and $2k-1$ is an odd prime power, then under the GRH, the equation $x^2+(2k-1)^y=k^z$ has only one positive integer solution $(x,y,z)=(k-1,1,2)$. The above results solve some difficult cases of Terai's conecture concerning this equation.
\end{abstract}

\maketitle

\section{Introduction}\label{sec:1}
Let $\mathbb{Z}$, $\mathbb{N}$ be sets of all integers and positive integers respectively. Let $d,k$ be fixed coprime positive integers with $\min\{d,k\}>1$. A class of polynomial-exponential Diophantine equations of the form
\begin{equation}\label{eq.1.1}
x^2+d^y=k^z,\,\,x,y,z\in\mathbb{N}
\end{equation}
is usually called the generalized Ramanujan-Nagell equation. It has a long history and rich content (see \cite{LS}). In 2014, N. Terai \cite{T1} discussed the solution of \eqref{eq.1.1} in the case $d=2k-1$. He proposed the following conjecture:
\begin{conj}\label{con}
For any $k$ with $k>1$, the equation
\begin{equation}\label{eq.1.2}
x^2+(2k-1)^y=k^z,\,\,x,y,z\in\mathbb{N}
\end{equation}
has only one solution $(x,y,z)=(k-1,1,2)$.
\end{conj}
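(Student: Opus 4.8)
The plan is to fix $k>1$ and show that every solution of \eqref{eq.1.2} coincides with $(x,y,z)=(k-1,1,2)$ by a case analysis organized according to the parities of $y$ and $z$, disposing of the easy cases elementarily and reserving the modular method for the hard residual case. First I would record the basic constraints. Since $2k-1>k$ for $k>1$, every solution satisfies $k^z=x^2+(2k-1)^y>(2k-1)^y\ge 2k-1>k$, so $z\ge 2$. Reducing modulo $k$ and using $2k-1\equiv -1\pmod k$ gives $x^2\equiv(-1)^{y+1}\pmod k$; in particular a solution with $y$ even forces $-1$ to be a quadratic residue modulo $k$, which already excludes many $k$.

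Next I would eliminate the case $z$ even. Writing $z=2z_1$ and factoring $(2k-1)^y=(k^{z_1}-x)(k^{z_1}+x)$, the coprimality of $k$ and $2k-1$ forces the two factors to be coprime, so $k^{z_1}-x=1$ and $k^{z_1}+x=(2k-1)^y$, whence $2k^{z_1}=1+(2k-1)^y$. A congruence modulo $2k$ shows $y$ must be odd, and then the lifting-the-exponent lemma applied at each prime divisor of $k$ (treating $p=2$ separately, where it gives $z_1=1$ at once when $k$ is even) yields $z_1 v_p(k)=v_p(k)+v_p(y)$. For $z_1\ge 2$ this forces $y$ to be divisible by a large power of the odd part of $k$, contradicting the size bound $(2k-1)^y<2k^{z_1}$. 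Hence $z_1=1$, $y=1$, which recovers the known solution and nothing more.

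This leaves $z$ odd, so $z\ge 3$. When $y$ is even I would work in $\ZZ[\ii]$: the constraints above force $k$ odd with every prime factor $\equiv 1\pmod 4$, and factoring $x+(2k-1)^{y/2}\ii=k^z$ into Gaussian primes expresses the solution through a $z$-th power of a Gaussian integer, which can be bounded and shown to produce no new solution. The genuinely hard case is $y$ odd and $z$ odd. Here I would factor in $\QQ(\sqrt{-(2k-1)})$, identify a Lehmer pair whose terms are divisible only by primes dividing $k$, and invoke the Bilu--Hanrot--Voutier primitive divisor theorem to bound $y$; simultaneously a linear form in two logarithms (Matveev/Laurent) bounds $z$ in terms of $k$.

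The hard part will be closing the gap uniformly in $k$ in this last case. The bound coming from linear forms in logarithms is astronomically larger than what either direct computation or the modular method can reach for a single $k$, and, crucially, the clean factorization and the Frey-curve construction require control of the arithmetic of $2k-1$. This is exactly where I would attach a Frey elliptic curve to the near-solution, apply modularity and Ribet level-lowering to pin the mod-$\ell$ Galois representation to a short list of newforms, and derive a contradiction from the resulting congruences; but the level one can handle depends on the factorization of $2k-1$, so the argument runs cleanly only when $2k-1$ is a prime power, and the remaining finite range of $k$ must still be cleared computationally. Bridging this gap for all $k$ without the prime-power hypothesis, and uniformly over every $k$, is the principal obstacle — and is precisely why the theorem proved here restricts to $k\equiv 0\pmod 4$, $30<k<724$, with $2k-1$ an odd prime power, rather than settling the full conjecture.
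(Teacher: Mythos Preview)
The statement you are addressing is Conjecture~\ref{con}, which the paper does \emph{not} prove: it is presented as an open problem attributed to Terai, and the paper establishes only the restricted Theorem~\ref{maintheo}. There is therefore no proof in the paper to compare against, and your own final paragraph acknowledges exactly this, correctly diagnosing why the argument cannot be pushed through for general $k$ and why the paper confines itself to $4\mid k$, $30<k<724$, $2k-1$ an odd prime power.

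Read as a strategy sketch rather than a proof, some of your intermediate steps are solid and some are not. The case $z$ even is handled correctly: lifting-the-exponent at $2$ (for even $k$) or at the odd primes dividing $k$ combined with the crude size bound $(2k-1)^y<2k^{z_1}$ does force $z_1=1$. By contrast, the case $y$ even, $z$ odd is waved through: the Gaussian-integer factorisation is set up correctly, but ``can be bounded and shown to produce no new solution'' hides a genuine argument you do not supply --- one must show that $\mathrm{Im}(\alpha^z)$ is never of the form $\pm(2k-1)^{y/2}$ for $z\ge3$ odd, which is itself a nontrivial Diophantine problem not known uniformly in $k$. In the main case $y,z$ both odd, the appeal to Bilu--Hanrot--Voutier is imprecise: for $y>1$ the factorisation in $\QQ(\sqrt{-(2k-1)})$ does not directly exhibit a Lehmer pair, and in any event primitive divisors bound the index of the sequence, not $y$. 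So while your overall map of the problem and your closing assessment are accurate, the proposal is not a proof of the conjecture, and a couple of the subcases it dismisses as routine are themselves open in full generality.
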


The above conjecture has been verified in some special cases (see \cite[Theorem 3.1]{BB}, \cite[Corollary 1.4]{DGX}, \cite[Theorem 1.2]{FT} and \cite[Proposition 3.3]{T1}). Most of solved cases of Conjecture \ref{con} focus on the case $4\nmid k$, and very little is known in the case $4\mid k$. In \cite{DGX}, M.-J. Deng, J. Guo and A.-J. Xu verified Conjecture \ref{con} when the case $k\equiv 3 \pmod{4}$ with $3\le k\le 499$. For the case $4\mid k$, N. Terai \cite{T1} used some classical methods to discuss \eqref{eq.1.2} for $k\le 30$. However, his method does not apply for $k\in\{12,24\}$. Until 2017, M. A. Bennett and N. Billerey \cite{BB} used the modular approach to solve the case $k\in\{12,24\}$. It follows that the case $4\mid k$ and  $2k-1$ is an odd prime power is a very difficult case to Conjecture \ref{con}. In this paper, using the modular approach we prove the following result:
\begin{theorem}\label{maintheo}
If $4\mid k$, $30<k<724$ and $2k-1$ is an odd prime power, then under the GRH, Conjecture \ref{con} is true.
\end{theorem}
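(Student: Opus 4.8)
The plan is to combine a few elementary congruence and factorisation steps with the modular method for ternary equations, and to finish off the residual cases as $S$-integral points on elliptic curves. Throughout put $d=2k-1$ and write $d=p^m$. First I would record the elementary reductions. Reducing equation \eqref{eq.1.2} modulo $4$, using $4\mid k^z$ and $d\equiv-1\pmod 4$, forces $y$ to be odd; and since $4\mid k$ gives $d\equiv 7\pmod 8$, one gets $p\equiv 7\pmod 8$ and $m$ odd, while $\gcd(x,d)=\gcd(x,k)=1$. If $z$ is even, $z=2z_0$, then $(k^{z_0}-x)(k^{z_0}+x)=d^y$ is a factorisation into coprime factors (a common prime would divide $2x$ and $d$, hence $x$ and $k$), so $k^{z_0}-x=1$ and $2k^{z_0}=1+d^y$; expanding $d^y$ modulo $4k^2$ and comparing $2$-adic valuations (this is where $4\mid k$ enters) yields $z_0=1$, hence $y=1$, which is the asserted solution. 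Thus it remains to treat $z$ odd, where $z=1$ is impossible by size and $z=3$ forces $y=1$ with $k^3-d=(k-1)(k^2+k-1)$ a product of coprime non-squares. A routine descent reduces us to the case $y$ an odd prime; the residual cases $y=3$ and ($y=1$, $z\ge 5$) are postponed to the last step.

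Next, fix a hypothetical solution with $y\ge 5$ prime and $z\ge 5$ odd. When $y\mid z$, writing $z=yz'$ recasts equation \eqref{eq.1.2} as $(k^{z'})^y-d^y=x^2$, a ternary equation of signature $(y,y,2)$; to it I attach the standard Frey--Hellegouarch curve $E/\mathbb{Q}$, whose minimal discriminant and conductor are supported only on $2$, on $p$, and on the primes dividing $k$, with $y\mid v_p(\Delta_E)$. By modularity and Ribet level-lowering, the representation $\overline{\rho}_{E,y}$ --- irreducible for $y\ge 5$ in view of the conductor shape and $p\equiv 7\pmod 8$ --- arises from a weight-$2$ newform of level dividing an explicit integer $N_0(k)$ built from $2$ and $\mathrm{rad}(k)$ and independent of the solution. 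When instead $\gcd(y,z)=1$, I factor $d^y=k^z-x^2$ in the real quadratic field $\mathbb{Q}(\sqrt k)$, in which $p$ splits because $2$ is a square modulo $p$ (when $k$ is itself a perfect square the factorisation takes place over $\mathbb{Q}$ and a short $2$-adic argument, parallel to the one above, finishes matters at once); using the class group and the fundamental unit $\varepsilon_0$ this reduces to finitely many equations of the shape $\varepsilon_0^{\,j}\gamma^y=k^{(z-1)/2}\sqrt k-x$, to which the same Frey-curve analysis, or an explicit bound on $z$, applies.

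For each of the finitely many $k$ with $30<k<724$, $4\mid k$ and $2k-1$ a prime power, I would then list the weight-$2$ newforms of level dividing $N_0(k)$. Those with irrational Hecke eigenvalue field are killed by Kraus's method --- comparing $a_\ell(f)$ with the admissible values of $a_\ell(E)$ modulo $y$ at auxiliary primes $\ell\nmid 2kd$ --- which also bounds $y$ whenever it does not eliminate $f$ outright; those with rational eigenvalues, corresponding to elliptic curves $E'/\mathbb{Q}$, are removed by analysing the inertia actions at $2$ and $p$, invoking $p\equiv 7\pmod 8$ and $4\mid k$, with a second Frey curve where needed. Finally the leftover equations $x^2=k^z-d$ and $x^2=k^z-d^3$ with $z$ odd, after distributing $z$ into residue classes modulo a small integer, become the determination of $S$-integral points on explicit Mordell curves $v^2=u^3+c$ with $S\subseteq\{q:q\mid 6kd\}$, which is handled by the elliptic-logarithm method. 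Assembling all the cases yields Theorem \ref{maintheo}.

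The hard part will be the newform elimination of the previous paragraph, carried out uniformly over all the admissible $k$. The troublesome configuration is a newform with rational eigenvalues whose elliptic curve $E'$ matches the bad-reduction behaviour of the Frey curve at every prime, so that it survives the trace comparison at every auxiliary $\ell$; such a form must be excluded by a finer local argument exploiting $2k-1=p^m$ with $p\equiv 7\pmod 8$ and $4\mid k$, or by a multi-Frey setup. Guaranteeing that this elimination goes through for every one of the finitely many $k$ in the stated range --- and that the auxiliary elliptic-curve and $S$-integral-point computations terminate without turning up an unexpected point --- is where the real difficulty lies.
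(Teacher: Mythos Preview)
Your overall architecture --- elementary congruences, a Frey curve for large prime $y$, and $S$-integral points on Mordell curves for the small exponents --- matches the paper's, but the execution differs in ways that matter. The case split into $y\mid z$ versus $\gcd(y,z)=1$ is unnecessary: the paper writes \eqref{eq.1.2} as a signature-$(y,y,2)$ equation $(-1)(2k-1)^y+Bb^y=x^2$ for \emph{every} odd $z$, by taking $B$ and $b$ to be the $y$-th-power-free and $y$-th-power parts of $k^z$, and then shows (using $z>y$ odd and $4\mid k$) that $\ord_2(Bb^y)\ge 6$, landing in Bennett--Skinner's case (v). This yields a single Frey curve $E_3$ of conductor $\mathrm{rad}\bigl((2k-1)k\bigr)$, uniformly in $z$; your quadratic-field detour is avoidable, and the sentence ``the same Frey-curve analysis, or an explicit bound on $z$, applies'' to its output is too vague to stand as a proof step. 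The paper also does not eliminate newforms by a Kraus-type trace comparison: since every relevant conductor is below $500{,}000$ for $k<724$, it simply searches Cremona's tables for curves of that conductor and checks that none has Weierstrass coefficients of the required shape $[1,(x-1)/4,0,k^z/64,0]$, thereby sidestepping the ``matching rational newform'' obstruction you flag at the end.

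Two further differences. The paper feeds both $y=3$ and $y=5$ (indeed all $y\equiv 3,5\pmod 6$, via $y=6A+i$) into the Mordell-curve step $V^2=U^3-(2k-1)^ik^{2j}$, rather than handling $y=5$ modularly; your claim that $\overline\rho_{E,5}$ is irreducible ``in view of the conductor shape and $p\equiv 7\pmod 8$'' is asserted but not justified, and at $y=5$ this is genuinely delicate. Finally, several of the resulting Mordell curves turn out to have rank-one generators of enormous height (up to roughly $22500$), for which the $S$-integral-point routine fails outright; the paper disposes of those seven curves by an entirely separate Pell-equation argument on the structure of solutions to $X^2-DY^2=(-q)^Z$, a contingency your sketch does not anticipate.
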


\section{Preliminaries}
This section introduces some well known notions and results that will be used to prove the main result.
\subsection{The modular approach}
The most important progress in the field of the Diophantine equations has been with Wiles' proof of Fermat's Last Theorem \cite{TW},\cite{Wiles}. His proof is based on deep results about Galois representations associated to elliptic curves and modular forms. The method of using such results to deal with Diophantine problems, is called the \textit{modular approach}. After Wiles' proof, the original strategy was strengthened and many mathematicians achieved great success in solving other equations that previously seemed hard. As a result of these efforts, the generalized Fermat equation
\begin{equation}\label{eq.2.1}
Ax^p+By^q=Cz^r,\,\,\text{with}\,\,1/p+1/q+1/r<1,
\end{equation} 
where $p,q,r\in\mathbb{Z}_{\ge 2}$, $A,B,C$ are non-zero integers and $x,y,z$ are unknown integers became a new area of interest. Call an integer solution $(x,y,z)$ to such an equation \textit{proper} if $\gcd(x,y,z)=1$. It was proved that equation \eqref{eq.2.1} has finitely many proper solutions by H. Darmon and A. Granville \cite[Theorem 2]{DG}. We call the triple of exponents $(p,q,r)$ as in \eqref{eq.2.1} the signature of equation. In the cases $1/p+1/q+1/r=1$, the proper solutions give rise to rational points on certain curves of genus one. It is easily demonstrated that for each $p,q,r$, there exist values of $A,B,C$ such that the equation has infinitely many proper solutions (see \cite[Section 6]{DG}). There also exist values of $A,B,C$ such that the equation has no proper solutions; though, for any $A,B,C,$ there are number fields which contain infinitely many proper solutions (see \cite[Subsection 5.4]{DG}). In the cases where $1/p+1/q+1/r>1$, the proper solutions correspond to rational points on certain curves of genus zero. For this case, it is easy to show that there are infinitely many proper solutions of every equation $x^p+y^q=z^r$ (see \cite[Section 7]{DG}).

In recent 30 years, several authors considered many cases of the above equations. Two survey papers which were written 
by M.A. Bennett, I. Chen, S. Dahmen and S. Yazdani \cite{BCDY} and M. A. Bennett, P. Mih\v{a}ilescu and S. Siksek \cite{BMS} are good references for the case $ABC=1$.

One can find the details concerning modular approach in \cite[Chapter 15]{Cohen} and \cite{Si}. 
\subsection{Signature $(n,n,2)$}
Here we give recipes for signature $(n,n,2)$ which was firstly described by M.A. Bennett and C. Skinner \cite{BS} (see also \cite{IK}). We denote by $rad(m)$ the radical of $|m|$, i.e. the product of distinct primes dividing $m$, and by $\ord_{p}(m)$ the largest nonnegative integer $k$ such that $p^k$ divides $m$.

We always assume that $n\geq7$ is prime, and $a, b, c, A, B$ and $C$ are nonzero integers with $Aa$, $Bb$ and $Cc$ pairwise coprime, $A$ and $B$ are $n$th-power free, $C$ squarefree satisfying
\begin{equation} \label{eq.2.2}
	Aa^n+Bb^n=Cc^2.  
\end{equation}
We further assume that we are in one of the following situations:

$(i)$ $abABC\equiv 1 \pmod{2}$ and $b\equiv -BC \pmod{4}$.

$(ii)$ $ab\equiv 1 \pmod{2}$ and either $\ord_{2}(B)=1$ or $\ord_{2}(C)=1$.

$(iii)$  $ab\equiv 1 \pmod{2}$, $ord_{2}(B)=2$ and $C\equiv -bB/4 \pmod{4}$.

$(iv)$ $ab\equiv 1\pmod{2}$, $ord_{2}(B)\in\{3,4,5\}$ and $c\equiv C \pmod{4}$.

$(v)$ $\ord_{2}(Bb^n)\geq6$ and $c\equiv C \pmod{4}$.

In case $(v)$, we will consider the curve
\begin{equation}\label{eq.2.3}
	E_{3}(a,b,c): Y^2+XY=X^3+\dfrac{cC-1}{4}X^2+\dfrac{BCb^n}{64}X,
\end{equation}
which is defined over $\mathbb{Q}$. By \cite[Lemma 2.1 ]{BS},
the conductor of the curve $E=E_{3}(a,b,c)$ is given by
\begin{equation}\label{eq.2.4}
	N(E)=2^\alpha \cdot C^2\cdot rad(abAB)
\end{equation}
where
\[
\alpha=\begin{cases}
	-1 & \mbox{if $i=3$}, \, \mbox{case $(v)$} \, \, \mbox{and} \, \, \ord_{2}(Bb^n)=6,  \\
	0 & \mbox{if $i=3$}, \, \mbox{case $(v)$} \, \, \mbox{and} \, \, \ord_{2}(Bb^n)\geq7.  \\
\end{cases}
\]

\section{Proof of Theorem \ref{maintheo}.}
Here and below, we assume that $(x,y,z)$ is a solution of \eqref{eq.1.2} with $(x,y,z)\neq (k-1,1,2)$. Then, by \eqref{eq.1.2}, we can get $z>y$ immediately. Obviously, if we can prove that the solution $(x,y,z)$ does not exist, then Conjecture \ref{con} is true. The following two lemmas are basic properties on the solution $(x,y,z)$.

\begin{lemma}\label{lem.3.1}
Suppose $4 \mid k$. Then $2 \nmid y$.
\end{lemma}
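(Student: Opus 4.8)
The plan is to reduce equation \eqref{eq.1.2} modulo $4$. Since $4\mid k$ and $z\in\mathbb{N}$ forces $z\ge 1$, we have $k^z\equiv 0\pmod 4$, so the equation gives $x^2+(2k-1)^y\equiv 0\pmod 4$. Because $\gcd(2k-1,k)=1$ (the standing assumption $\gcd(d,k)=1$ with $d=2k-1$), the base $2k-1$ is odd. Now suppose for contradiction that $2\mid y$. Then $(2k-1)^y$ is the square of an odd integer, hence $(2k-1)^y\equiv 1\pmod 4$, and we obtain $x^2\equiv -1\equiv 3\pmod 4$. This contradicts the elementary fact that every square is congruent to $0$ or $1$ modulo $4$. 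Therefore $2\nmid y$.

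I expect essentially no obstacle here: the statement is an immediate congruence observation. The only place the hypotheses enter is in deducing $4\mid k^z$, which uses both $4\mid k$ and $z\ge1$; once $y$ is assumed even, the contradiction is forced purely by reduction mod $4$, and no information about $x$, about the size of $k<724$, or about $2k-1$ being a prime power is needed. (An alternative route is to reduce modulo $k$, using $2k-1\equiv -1\pmod k$ to get $x^2\equiv -(-1)^y\pmod k$, and then pass to the divisor $4\mid k$; but the direct mod-$4$ argument above is the cleanest, so that is the one I would carry out.)
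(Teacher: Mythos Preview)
Your proof is correct and is essentially identical to the paper's own argument: reduce \eqref{eq.1.2} modulo $4$, use $4\mid k^z$, and observe that $2\mid y$ forces $x^2\equiv -1\pmod 4$, which is impossible. The paper's proof is just a one-line version of what you wrote.
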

\begin{proof}
If $2 \mid y$ then \eqref{eq.1.2} implies $x^2+1 \equiv 0 \pmod{4}$, impossible.
\end{proof}	

\begin{lemma}\label{lem.3.2}
If $2k-1$ is an odd prime power, then $2\nmid z$.	
\end{lemma}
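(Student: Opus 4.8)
The statement is really two independent claims, and the first is a size estimate. \emph{For $z>y$} I would simply use that $x\ge 1$ and $2k-1>k$ (since $k>1$): by \eqref{eq.1.2},
\begin{equation*}
k^z=x^2+(2k-1)^y>(2k-1)^y>k^y,
\end{equation*}
so $z>y$. Note this part uses neither $4\mid k$ nor the prime-power hypothesis.

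\emph{For $2\nmid z$} the plan is to argue by contradiction. Suppose $z=2w$ with $w\in\mathbb{N}$; then \eqref{eq.1.2} factors as $(k^w-x)(k^w+x)=(2k-1)^y$, and both factors are positive (their product is). The first step is to check the two factors are coprime: a common prime divisor $q$ divides $(2k-1)^y$, hence is odd, and divides $2x$ and $2k^w$, hence $x$ and $k^w$; but $\gcd(k,x)=1$, since a prime dividing $k$ and $x$ would divide $(2k-1)^y=k^z-x^2$ and therefore $\gcd(k,2k-1)=1$, a contradiction. Now I would invoke the hypothesis that $2k-1=p^m$ is an odd prime power: the only coprime factorization of $p^{my}$ into two positive integers is $1\cdot p^{my}$, and since $x\ge 1$ forces $k^w-x<k^w+x$, we conclude $k^w-x=1$ and $k^w+x=(2k-1)^y$, i.e.
\begin{equation*}
2k^w=1+(2k-1)^y.
\end{equation*}

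At this point I would bring in Lemma \ref{lem.3.1}, which gives $2\nmid y$. Since $y$ is odd, $2k=1+(2k-1)$ divides $1+(2k-1)^y$, and the cofactor $Q=\bigl(1+(2k-1)^y\bigr)/(2k)=\sum_{i=0}^{y-1}(1-2k)^i$ satisfies $k^{w-1}=Q$. As $1-2k$ is odd, $Q$ is a sum of $y$ odd integers, hence odd (because $y$ is odd); but $k$ is even, so $k^{w-1}=Q$ forces $w=1$. Then $2k=1+(2k-1)^y$ yields $(2k-1)^y=2k-1$, so $y=1$ (since $2k-1>1$), and finally $x=k^w-1=k-1$; that is, $(x,y,z)=(k-1,1,2)$, contrary to our standing assumption. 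This contradiction proves $2\nmid z$.

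I do not expect a genuine obstacle: the argument is elementary bookkeeping. The one point requiring care is the coprimality of $k^w\pm x$, since that is exactly what allows the prime-power hypothesis to collapse the factorization to $1\cdot(2k-1)^y$ — for a general $d=2k-1$ the factors could split as $a^yb^y$ with $ab=d$ and $a>1$, which would need a separate treatment. One should also double-check that the degenerate equation $2k^w=1+(2k-1)^y$ reproduces precisely the excluded triple $(k-1,1,2)$ and not some new solution.
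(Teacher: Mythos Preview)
Your proof is correct. The paper itself gives no argument here, merely citing Lemma~2.6 of \cite{DGX}; you have supplied the standard self-contained proof, and the factoring device $(k^w-x)(k^w+x)=(2k-1)^y$ together with the prime-power hypothesis is precisely the mechanism the paper itself reuses in its Lemma~\ref{lem.3.4}.

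One small observation: you rely on Lemma~\ref{lem.3.1} (hence on $4\mid k$) to obtain $2\nmid y$, and again on $2\mid k$ in the last step to force $w=1$. Lemma~\ref{lem.3.2} as stated does not carry the hypothesis $4\mid k$, although in Section~3 that is the ambient assumption of Theorem~\ref{maintheo}, so this is harmless for the paper's purposes. If you wanted to decouple from Lemma~\ref{lem.3.1}, note that reducing $2k^w=1+(2k-1)^y$ modulo $k$ already gives $0\equiv 1+(-1)^y\pmod{k}$, forcing $y$ odd whenever $k>2$.
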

\begin{proof}
See Lemma 2.6 of \cite{DGX}.
\end{proof}
\begin{lemma}\label{lem.3.3-1}
Let $F(t)=t+a/t$ be a function of the real variable $t$, where $a$ is a constant with $a>1$. Then $F(t)$ is a strictly decreasing function for $1\le t<\sqrt{a}$.
\end{lemma}
\begin{proof}
Since $F'(t)=1-a/t^2<0$ for $1\le t<\sqrt{a}$, where $F'(t)$ is the derivative of $F(t)$, we obtain the lemma immediately.
\end{proof}

\begin{lemma}\label{lem.3.3}
If $k$ is a power of 2 and $4\mid k$, then Conjecture \ref{con} is true.
\end{lemma}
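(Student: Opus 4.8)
I would start by writing $k=2^m$ with $m\ge 1$, so that \eqref{eq.1.2} becomes $x^2+d^y=2^{mz}$ with $d:=2k-1=2^{m+1}-1$ odd. Two facts are immediate: $x$ is odd (since $x^2=2^{mz}-d^y$ is odd), and $z\ge 2$, because $z=1$ would give $d^y\le 2^m=k<2k-1\le d^y$. The case $m=1$ (i.e. $k=2$, $d=3$) is then disposed of directly modulo $8$: there $x^2\equiv 1$ and $3^y\equiv 1$ or $3$, so $x^2+3^y\equiv 2$ or $4\pmod 8$, forcing $z\le 2$, hence $z=2$ and $x=y=1$. From now on $m\ge 2$, so $4\mid k$ and Lemma~\ref{lem.3.1} tells us that $y$ is odd.

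Next I would split according to the parity of $mz$. If $mz$ is odd then $m$ is odd, hence $m+1$ is even and $3\mid 2^{m+1}-1=d$; reducing $x^2+d^y=2^{mz}$ modulo $3$ then gives $x^2\equiv 2^{mz}\equiv(-1)^{mz}\equiv -1\pmod 3$, which is impossible. So $mz$ is even, and putting $N:=2^{mz/2}$ (even, since $mz\ge 4$) we may factor $d^y=(N-x)(N+x)$; both factors are positive, odd (as $N$ is even and $x$ odd), and coprime (a common divisor divides $\gcd(2x,2N)=2$).

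It then remains to exploit this coprime factorization. If $y=1$, then $x^2=N^2-2^{m+1}+1<N^2$ forces $x\le N-1$, so $N^2-2^{m+1}+1\le N^2-2N+1$, i.e. $N\le 2^m$, i.e. $z\le 2$; thus $z=2$ and $x^2=(2^m-1)^2$, which only reproduces the solution $(k-1,1,2)$. If $y\ge 3$, then since $N-x$ and $N+x$ are coprime with product the $y$-th power $d^y$, each is itself a $y$-th power: $N-x=a^y$, $N+x=b^y$ with $ab=d$, $\gcd(a,b)=1$, $a<b$; in particular $a,b$ are odd and $a\neq b$ (otherwise $d=1$). Adding, $a^y+b^y=2N=2^{mz/2+1}$. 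Since $y$ is odd, $a+b$ divides $a^y+b^y$, so $a+b$ is a power of $2$, and $(a^y+b^y)/(a+b)=a^{y-1}-a^{y-2}b+\cdots+b^{y-1}$ is an odd integer (a sum of an odd number of odd terms) which is also a power of $2$, hence equal to $1$; but $a^y+b^y\ge a+b^2>a+b$, a contradiction. This exhausts all cases and shows \eqref{eq.1.2} has no solution besides $(k-1,1,2)$.

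I expect the $y\ge 3$ sub-case to be the only genuinely delicate point: the argument there rests on converting the coprimality of $N\pm x$ into the clean two-term equation $a^y+b^y=2^{t}$ with $ab=2k-1$, and then playing its $2$-adic valuation against the elementary bound $(a^y+b^y)/(a+b)>1$. The remaining steps are congruence bookkeeping, with the pleasant feature that when $mz$ is odd the divisibility $3\mid 2^{m+1}-1$ appears for free and eliminates that branch at once.
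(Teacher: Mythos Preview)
Your argument is correct. The case split on $m=1$ versus $m\ge 2$, the elimination of $mz$ odd via $3\mid 2^{m+1}-1$, the coprime factorization $(N-x)(N+x)=d^y$ forcing each factor to be a $y$-th power, and the $2$-adic contradiction from $a^y+b^y=2^{mz/2+1}$ with $(a^y+b^y)/(a+b)$ odd and $>1$ all go through as written; the only point where one should pause is to note $b\ge 2$ (which you have, since $a<b$ are positive integers), so that $b^y>b$.

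By contrast, the paper does not prove this lemma at all: it simply cites Terai's original paper \cite{T1}, where the case $k=2^m$ was handled. So your proof is not a variant of the paper's argument but a self-contained replacement for an external reference. What you gain is independence from \cite{T1} and a completely elementary treatment; what the paper's one-line citation buys is brevity, at the cost of relying on the reader to consult Terai for the details. Your route via the factorization $a^y+b^y=2^t$ is in fact the classical way to handle such equations and is very likely close in spirit to what Terai does, though you have organized the parity and congruence bookkeeping cleanly enough that nothing is left implicit.
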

\begin{proof}
Let $(x,y,z)$ be a solution different from $(k-1,1,2)$ and let $k=2^r$, where $r\ge 2$ is a positive integer. Then by \eqref{eq.1.2}, we have
\begin{equation}\label{eq.3.1}
x^2+(2^{r+1}-1)^y=2^{rz}.
\end{equation}
Recall that $x$ is odd, hence $2^{rz/2}+x$ and $2^{rz/2}-x$ are relatively prime.

If $2\mid rz$, then from \eqref{eq.3.1} we get
\begin{equation*}
2^{rz/2}+x=f^y,\,\,2^{rz/2}-x=g^y,\,\,2^{r+1}-1=fg,\,\,f,g\in\mathbb{N},\,\,2\nmid fg,\,\,\gcd(f,g)=1,
\end{equation*}
whence we obtain
\begin{equation}\label{eq.3.2}
2^{rz/2+1}=f^y+g^y=(f+g)\bigg(\dfrac{f^y+g^y}{f+g}\bigg).
\end{equation}
By Lemma \ref{lem.3.1}, we have $2\nmid y$. Therefore, since $2\nmid fg$, $(f^y+g^y)/(f+g)$ is an odd positive integer. It follows from \eqref{eq.3.2} that $(f^y+g^y)/(f+g)=1$, $y=1$. Take $a=2^{r+1}-1$ and $t=g$. By Lemma \ref{lem.3.3-1}, we have
\begin{equation}\label{eq.3.3}
2^{rz/2+1}=f+g=\dfrac{2^{r+1}-1}{g}+g\le (2^{r+1}-1)+1=2^{r+1}.
\end{equation} 
Recall that $z>y=11$. We find from \eqref{eq.3.3} that $f=2^{r+1}-1$, $g=1$, $z=2$ and $(x,y,z)=(2^r-1,1,2)=(k-1,1,2)$, a contradiction.

If $2\nmid rz$, then $2\nmid r$. Since $2\nmid y$, we see from \eqref{eq.3.1} that the equation
\begin{equation}\label{eq.3.4}
X^2+(2^{r+1}-1)Y^2=4\cdot 2^{Z},\,\,X,Y,Z\in\mathbb{Z},\,\,\gcd(X,Y)=1,\,\,Z>0
\end{equation} 
has a solution
\begin{equation}\label{eq.3.5}
(X,Y,Z)=(x,(2^{r+1}-1)^{(y-1)/2},rz-2).
\end{equation}
Since $1^2+(2^{r+1}-1)\cdot 1^2=4\cdot 2^{(r-1)}$, $(1,1,r-1)$ is a solution and clearly the one with smaller value of $z$, hence is what is called a \textit{minimal} solution in \cite{Le}. Then Theorem 2 of \cite{Le} implies that
\begin{equation}\label{eq.3.6}
rz-2=(r-1)t,\,\,t\in\mathbb{N}.
\end{equation}
However, since $2\mid r-1$, we get from \eqref{eq.3.6} that $2\mid rz$, a contradiction. Thus, the lemma is proved.
\end{proof}
\begin{lemma}\label{lem.3.4}
If $4\mid k$, $2k-1$ is an odd prime power and $k$ is a square, then Conjecture \ref{con} is true.
\end{lemma}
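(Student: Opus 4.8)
The plan is to use the hypothesis that $k$ is a square, which (together with $2\nmid z$ from Lemma \ref{lem.3.2}) makes $k^z$ a perfect square and turns \eqref{eq.1.2} into a difference of two squares. Writing $k=\ell^2$, the condition $4\mid k$ gives $\ord_2(\ell)\ge 1$, and from $x^2+(2k-1)^y=k^z=(\ell^z)^2$ one obtains
\[
(\ell^z-x)(\ell^z+x)=(2k-1)^y .
\]
Since $(2k-1)^y<k^z$ we have $0<x<\ell^z$, so both factors on the left are positive integers. Next I would show these two factors are coprime: writing $2k-1=p^a$ with $p$ an odd prime, their product is $p^{ay}$, while $p\nmid x$ (otherwise $p\mid x^2$ and $p\mid(2k-1)^y$ would force $p\mid k^z$, contradicting $\gcd(k,2k-1)=1$), so the greatest common divisor of the two factors is a power of $p$ prime to $p$, hence $1$. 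Therefore $\ell^z-x=1$ and $\ell^z+x=(2k-1)^y$, and adding these gives the key identity
\[
2\ell^z=(2k-1)^y+1 .
\]

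The final step is to read off a contradiction from this identity by comparing $2$-adic valuations. On the left, $\ord_2(2\ell^z)=1+z\,\ord_2(\ell)$. On the right, since $y$ is odd by Lemma \ref{lem.3.1}, the cofactor $(2k-1)^{y-1}-(2k-1)^{y-2}+\cdots-(2k-1)+1$ is a sum of an odd number of odd terms, hence odd, so $\ord_2((2k-1)^y+1)=\ord_2(2k)=1+2\,\ord_2(\ell)$. Equating the two expressions yields $(z-2)\,\ord_2(\ell)=0$; since $\ord_2(\ell)\ge1$ this forces $z=2$, contradicting $2\nmid z$ (Lemma \ref{lem.3.2}). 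Hence no solution with $(x,y,z)\ne(k-1,1,2)$ exists, which is exactly Conjecture \ref{con}.

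The argument is entirely elementary, so there is no real obstacle; the only points that require care are the coprimality step that produces $\ell^z-x=1$ — this is precisely where the hypothesis that $2k-1$ is a prime power enters — and the bookkeeping of the lone factor of $2$ in $2\ell^z$ against the $z$ copies of $\ord_2(\ell)$, whose mismatch (given that $z$ is odd, hence $z\ge 3$) is what delivers the contradiction.
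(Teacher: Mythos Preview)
Your proof is correct and follows essentially the same route as the paper's: factor $k^z-x^2$ as a difference of squares, use the prime-power hypothesis to get $\ell^z-x=1$ and hence $2\ell^z=(2k-1)^y+1$, then use $2\nmid y$ and a $2$-adic/parity count to force $z=2$. One small quibble: your opening parenthetical that $2\nmid z$ helps make $k^z$ a perfect square is superfluous (since $k=\ell^2$ already gives $k^z=(\ell^z)^2$ for every $z$); the paper in fact never invokes Lemma~\ref{lem.3.2} here but simply notes that $z=2$ recovers $(x,y,z)=(k-1,1,2)$, contradicting the standing assumption.
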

\begin{proof}
Since $k$ is a square, $k=\ell^2$, where $\ell$ is a positive integer with $2\mid \ell$. by \eqref{eq.1.2}, we have $(\ell^z)^2-x^2=(\ell^z+x)(\ell^z-x)=(2k-1)^y$. Since $2k-1$ is an odd prime power and $\gcd(\ell^z+x,\ell^z-x)=1$, we have $\ell^z+x=(2k-1)^y$ and $\ell^z-x=1$, whence we get
\begin{equation}\label{eq.3.7}
2\ell^z=(2k-1)^y+1.
\end{equation}
Further, since $(2k-1)+1=2\ell^2$ and $2 \nmid y$ by Lemma \ref{lem.3.1}, we obtain from \eqref{eq.3.7} that
\begin{equation*}
2\ell^z=2\ell^2\left(\dfrac{(2k-1)^y+1}{(2k-1)+1}\right),
\end{equation*}
whence we get
\begin{equation}\label{eq.3.8}
\ell^{z-2}=\dfrac{(2k-1)^y+1}{(2k-1)+1},
\end{equation}
where $((2k-1)^y+1)/((2k-1)+1)$ is a positive integer. Notice that $2\mid \ell$ and $2\nmid ((2k-1)^y+1)/((2k-1)+1).$ We see from \eqref{eq.3.8} that $z=2$ and $(x,y,z)=(k-1,1,2)$. The lemma is proved. 
\end{proof}

For any fixed positive integers $m$ and $n$ with $n>1$, there exist unique positive integers $f$ and $g$ such that 
\begin{equation}\label{eq.3.9}
m=fg^n,\,\,f\,\,\text{is}\,\,n\text{th-power free.}
\end{equation}
The positive integer $f$ is called the $n$th-power free part of $m$, and denoted by $f(m).$ Similarly, $g$ is denoted by $g(m)$. Obviously, by \eqref{eq.3.9}, if $2\mid m$, then we have
\begin{equation}\label{eq.3.10}
\ord_{2}(m)=\ord_{2}(f(m))+n\ord_{2}(g(m)),\,\,0\le \ord_{2}(f(m))<n.
\end{equation}

Here, we consider equation \eqref{eq.1.2} where $y\ge 7$ is prime and $y=3$ or $y=5$, respectively.

\subsection{The case $y\ge 7$ prime}
Let $k$ be a positive integer with $4\mid k$. Suppose that $y\ge 7$ is prime. Then equation \eqref{eq.1.2}
becomes
\begin{equation}\label{eq.3.11}
(-1)\cdot(2k-1)^y+k^z=x^2.
\end{equation}
Then, the ternary equation \eqref{eq.2.1} can be obtained from \eqref{eq.3.11} by the substitution 
\begin{equation}\label{eq.3.12}
A=-1, \, \, a=2k-1, \, \, B=f(k^z),\, \, b=g(k^z), \, \, C=1,\, \, c=(-1)^{(x-1)/2} x, 
\end{equation}
where $f(k^z)$ and $g(k^z)$ are defined as in \eqref{eq.3.9}.
\begin{lemma}\label{lem.3.5}
If $4\mid k$, then $b$, $B$ and $C$ in \eqref{eq.3.12} satisfy the case $(v)$ with $\ord_{2}(Bb^n)>6$.
\end{lemma}
\begin{proof}
Since $2\mid k$, we see from \eqref{eq.1.2} that $2\nmid x$. So we have $C\equiv (-1)^{(x-1)/2}x\equiv 1 \pmod{4}$. In addition, by \eqref{eq.3.9} and \eqref{eq.3.12}, we have $Bb^n=f(k^z)(g(k^z))^n=k^z$, whence we get $\ord_{2}(Bb^n)=\ord_{2}(k^z)=z\ord_{2}(k)>6$ (recall that $z>y$). Thus, the lemma is proved.
\end{proof}

By \eqref{eq.2.3} and \eqref{eq.3.12}, we are interested in the following elliptic curve (called a Frey curve)
\begin{equation}\label{eq.3.13}
E_{3}: Y^2+XY=X^3+\dfrac{(-1)^{(x-1)/2}x-1}{4}X^2+\dfrac{k^z}{64}X.
\end{equation}
By Lemma \ref{lem.3.5} and \eqref{eq.2.4}, the conductor of this elliptic curve is
\begin{equation}\label{eq.3.14}
N(E_3)=rad(2k-1)\cdot rad(k).
\end{equation}
Note that when $k=720$, one gets that $N(E_3)=43170$ and $2k-1=1439$ is prime. But when $k=724$, one obtains $N(E_3)=523814$, outside the range of the Cremona elliptic curve database \cite{Cremona} where the upper bound for conductors is 500000. We therefore restrict attention to $30<k\le 720$. 

Using Lemmas \ref{lem.3.3} and \ref{lem.3.4}, we can exclude the cases $k=2^{r_0}$ with $r_0=6$ and $k=\ell^2$ with $\ell\in\{6,8,10,18,22,24\}$. This leaves 50 values of $k$ to consider. We proceed as follows.

For a given $k$ we compute by \eqref{eq.3.14} the conductor of the Frey curve at \eqref{eq.3.13}. Using Cremona's elliptic curve database \cite{Cremona} we obtain a list of isomorphism classes of elliptic curves for that conductor. In each class, we must determine whether there exists a model consistent with the model at \eqref{eq.3.13}. For example, when $k=192$ and the conductor is 2298, the isomorphism class of the curve labelled 2298.h4, $[1,0,0,-184,1088]$, contains the curve $[1,-48,0,576,0]$ (note that this fails to provide a solution to our problem, because the corresponding values of $y$, $z$, equal 1,2, not allowed). Since the curve \eqref{eq.3.13} has point $(0,0)$ of order 2, it is only necessary to consider isomorphism classes determined by curves with nontrivial 2-torsion. Suppose a Cremona class representative has nontrivial 2-torsion point $T_0$. To obtain an isomorphic curve of the form \eqref{eq.3.13} we must take the transformation mapping $T_0$ to $(0,0)$, and then test the resulting curve to see whether the $X-$ coefficient is of the form $\dfrac{k^z}{64}$. This was programmed into \textsc{Magma}. Resulting curves with corresponding $(y,z)=(1,2)$ are not allowed, and only one other curve resulted, namely $[1, 733/4, 0, 33/16, 0]$ when $k=132$ with $z=2$. But this does not provide a solution to our problem because there is no corresponding value of $x$ (or $y$). Finally, thus, we deduce \eqref{eq.1.2} has no solutions where $y\ge 7$.

\subsection{The case $y=3$ or $y=5$}

Here we solve the Diophantine equations
\begin{equation}\label{eq.3.15}
x^2+(2k-1)^3=k^z,\,\, z>3\,\,\text{odd},
\end{equation}
and
\begin{equation}\label{eq.3.16}
x^2+(2k-1)^5=k^z,\,\, z>5\,\,\text{odd},	
\end{equation}
where $4\mid k$, $30<k <724$ and $2k-1$ is an odd prime power.

Write in \eqref{eq.1.2} $y=6A+i$, $z=3B+j$ where $i=3$ or $5$ and $0\leq j\leq 2$, $A,B\ge 0$. Since $2k-1$ is an odd prime power, we have $2k-1=p^r$, where $p$ is an odd prime and $r$ is a positive integer. Then we see that 
\begin{equation*}
	\left(\frac{k^{B+j}}{(2k-1)^{2A}},\frac{xk^j}{(2k-1)^{3A}}\right)
\end{equation*}
is an $S$-integral point $(U,V)$ on the elliptic curve 
\begin{equation*}
	{\mathcal{E}}_{ijk}: V^2 = U^3 - (2k-1)^ik^{2j} \:,
\end{equation*}
where $S=\{p\}$, $4\mid k$, $30< k<724$ and $2k-1$ is a power of $p$, in view of the restriction $\gcd(k,x)=1$.

A practical method for the explicit computation of all 
$S$-integral points on a Weierstrass elliptic curve has been developed by
A. Peth\H{o}, H.G. Zimmer, J. Gebel and E. Herrmann in \cite{PZGH} and has been
implemented in \textsc{Magma} \cite{Magma}. 
The relevant routine $\mathtt{SIntegralPoints}$
worked without problems for all triples $(i,j,k)$ except for\\ $(i,j,k)\in\{(5,2,96),(5,1,120),(5,2,156),(5,2,180),(5,2,192),(5,2,220),$\\$ (3,1,232), (5,0,232), (5,2,232), (5,0,240),(5,2,240),(5,2,244),(5,0,304),$\\$ (5,1,304), (5,2,304),(3,2,316),(5,0,316), (5,2,316), (5,2,324),(5,0,360),$\\$(5,1,364), (5,2,364),(3,2,372), (5,1,372), (5,2, 372),(5,2,376),(3,1,412),$\\$(3,2,412), (5,0,412),(5,0,420),(5,0,432), (5,1,432),(3,2,444), (5,1,444),$\\$(5,2,444),(5,0,456), (5,1,456),(5,2,460),(5,1,492),(5,1,516), (5,2,516),$\\$(3,1,520),(5,0,520), (5,2,520),(5,2,532),(5,1,544),(5,2,552), (3,2,612),$\\$(5,0,612), (5,1,612),(5,2,612), (5,1,616),(5,0,640),(5,2,640),(3,2,652),$\\$(5,2,652), (5,2,660), (3,2,664) (5,0,664),(5,2,664), (5,1,684),(5,0,700),$\\$(5,1,700), (5,2,700),(5,0,712), (5,0,720),(5,1,720)\}$. The non-exceptional\\ triples $(i,j,k)$ do not give any positive integer solution to equation \eqref{eq.3.15} or \eqref{eq.3.16}.

\subsubsection{\textbf{The elementary approach to some exceptional triples}}
Thirty-eight of the above exceptional triples have been solved using an elementary approach, as follows.
\begin{lemma}\label{lem.3.7}
If $k\equiv 3$ or $4\pmod{5}$, then \eqref{eq.3.16}
has no solutions $(x,z)$.
\end{lemma} 
\begin{proof}
	We now assume that $(x,z)$ is a solution of \eqref{eq.3.16}. If $k\equiv 3 \pmod{5}$, then we have $2k-1\equiv 0\pmod{5}$, and by \eqref{eq.3.16}, $x^2\equiv k^z-(2k-1)^5\equiv 3^z \pmod{5}$. Further, since $2\nmid z$, we get $1=(3^z/5)=(3/5)=-1$, a contradiction, where $(*/*)$ is the Legendre symbol. If $k\equiv 4\pmod{5}$, then we have $x^2\equiv k^z-(2k-1)^5\equiv (-1)^z-2^5\equiv -1-2\equiv 2 \pmod{5}$. Hence, we can get a similar contradiction that $1=(2/5)=-1$. Thus, the lemma is proved.
\end{proof}
\begin{lemma}\label{lem.3.8}
	If $2\mid k$ and $k+1$ has an odd prime divisor $p$ with $p\equiv \pm 3 \pmod{8}$, then \eqref{eq.3.16} has no soutions $(x,z)$.
\end{lemma}
\begin{proof}
	By \eqref{eq.3.16}, since $2\nmid z$, we have $x^2\equiv k^z-(2k-1)^5\equiv (-1)^z-(-3)^5\equiv -1+243\equiv 242\equiv 2\cdot 11^2 \pmod{k+1}$. It implies that, for every odd prime divisor $p$ of $k+1$, we have $1=(2\cdot 11^2/p)=(2/p)=(-1)^{(p^2-1)/8}$ and $p\equiv \pm 1 \pmod{8}$. Therefore, if $p\equiv \pm 3 \pmod{8}$, then \eqref{eq.3.16} has no solutions $(x,z)$.
\end{proof}

Notice that if $2\mid k$ and every odd prime divisor $p$ of $k+1$ satisfies $p\equiv \pm 1 \pmod{8}$, then either $k+1\equiv 1 \pmod{8}$ or $k+1\equiv -1 \pmod{8}$. Hence, by Lemma \ref{lem.3.8}, we can obtain the following lemma immediately.
\begin{lemma}\label{lem.3.9}
	If $k\equiv 2$ or $4\pmod{8}$, then \eqref{eq.3.16} has no solutions $(x,z)$.
\end{lemma}
\begin{lemma}\label{lem.3.10}
For $k\in \{120,156,180,220,244,304,316,324,360,364,372,376,\\412,420,444,460,492,516,532,544,612,652,660,664,684,700\},$ \eqref{eq.3.16} has no solutions $(x,z)$.
\end{lemma}
\begin{proof}
By Lemmas \ref{lem.3.7}, \ref{lem.3.8} and \ref{lem.3.9}, \eqref{eq.3.16} has no solutions $(x,z)$ for $k\in\{244,304,324,364,444,544,664,684\}$, $k\in\{120,360,376\}$ and $k\in\{156,180,220,\\316,372,412,420,460,492,516,532,612,652,660,700\}$, respectively.
\end{proof}

Denote the rank of the elliptic curve ${\mathcal{E}}_{ijk}$ by $r$. Here, we seperate the above remaining twentynine exceptional triples $(i,j,k)$ depending on whether $r=0$, $r=1$ and $r=2$, respectively.
\subsubsection{\textbf{The case $r=0$}}
For the triples $(i,j,k)\in \{(5,1,456),(5,2,552),(5,1,616),\\(3,2,652),(5,1,720)\}$, there are no rational points (so no $S$-integral points) on ${\mathcal{E}}_{ijk}$ under the assumption that $r=0$ which is proved by descent algorithms of \textsc{Magma}.    
\subsubsection{\textbf{The case $r=1$}}
For each remaining triples $(i,j,k)$ (in total twentyfour triples), the rank of ${\mathcal{E}}_{ijk}$ is 1, i.e. $r=1$ except for $(i,j,k)=(3,2,664)$. We performed two-, four- and eight-descent algorithms or two-, four-, three- and twelve-descent algorithms for these triples. Since \textsc{Magma} found a generator for each of them, it was succesfull to show non-existence of $S$-integral points on ${\mathcal{E}}_{ijk}$ for the exceptional twentythree triples.

The rank 1 curves frequently have generators of large height. We can estimate the height of a generator in advance using the G-Z formula \cite{GZ}, as for example used by A. Bremner \cite{Br} in treating the family of curves $y^2=x(x^2+p)$. Having an estimate of the height in advance tells us whether it is likely that standard descent arguments, as programmed into \textsc{Magma}, will be successfull in finding the generator. 
For twentythree curves we are considering here, \textsc{Magma} was able to compute generators for all cases, using a combination of three-, four-, eight-, and twelve-descent algorithms.

\subsubsection{\textbf{The case $r=2$}}
The single instance of rank 2 was at $(i,j,k)=(3,2,664)$. \textsc{Magma} found only $S$-integral point $(6435758912 : 516297057335360 : 1)$ on the corresponding curve under the assumption that $1\le r\le 2$ and its generator $(402234932 : 8067141520865 : 1)$. By eight-descent algorithm, we found two independent points which are generators. So, it is confirmed that $r=2$ and this curve has only $S$-integral point $(6435758912 : 516297057335360 : 1)$. But it does not give any positive integer solution to equation \eqref{eq.3.15}. 

\bigskip

To sum up, the Theorem \ref{maintheo} is proved.

\bigskip

For the computations, we used  iMAC computer with the following characteristics: Processor Intel i5, 2.7 GHz, 16GB RAM, 1600 MHz DDR3. All computations are done by Magma V2.24-5.

The \textsc{Magma} \cite{Magma} files used to carry out the computations in this paper are available at: 
\href{http://gsoydan.home.uludag.edu.tr/images/programs.zip}{http://gsoydan.home.uludag.edu.tr/images/programs.zip}.

\section*{Acknowledgments} 
We are grateful to Professor Andrew Bremner for his generous helps about \textsc{Magma} computations and his useful ideas and would like to thank Professor Benjamin Matschke for his useful idea that shows how to reduce to S-integral points on Mordell equations \eqref{eq.3.15}-\eqref{eq.3.16} and sharing results of computations of his \textsc{Sage} codes with us and to Professor Jennifer Balakrishnan for enabling my connection with him. We also cordially thank an anonymous referee for carefully reading our paper and for giving such constructive comments which substantially helped improving the quality of the paper. G. S. was supported by the Research Fund of Bursa Uludag University under Project No: F-2020/8.

\end{document}